\documentclass[10pt]{amsart}
\usepackage{amsmath,amssymb,amsthm,url}
\usepackage{graphicx}
\usepackage{color}
\newtheorem{theorem}{Theorem}    

\newtheorem{corollary}[theorem]{Corollary}
\theoremstyle{definition}
\newtheorem{definition}{Definition}
\newtheorem{example}{Example}
\newtheorem{remark}{Remark}
\newtheorem*{remark*}{Remark}
\newcommand{\Z}{\mathbb{Z}}

\DeclareMathOperator{\Ord}{Ord}



\title{Ma-Qiu index, presentation distance, and local moves in knot theory}
\author[T.Ito]{Tetsuya Ito}
\address{Department of Mathematics, Kyoto University, Kyoto 606-8502, JAPAN}
\email{tetitoh@math.kyoto-u.ac.jp}
\subjclass{57K10, 57K14,57K16}
\keywords{Ma-Qiu index, Nakanishi index, local move}

\begin{document}

\begin{abstract}
The Ma-Qiu index of a group is the minimum number of normal generators of the commutator subgroup.
We show that the Ma-Qiu index gives a lower bound of the presentation distance of two groups, the minimum number of relator replacements to change one group to the other. Since many local moves in knot theory induce relator replacements in knot groups, this shows that the Ma-Qiu index of knot groups gives a lower bound of the Gordian distance based on various local moves. In particular, this gives a unified and simple proof of the Nakanishi index bounds of various unknotting numbers, including virtual or welded knot cases.
\end{abstract}

\maketitle

The purpose of this note is to give a unified point of view of the applications of the following quantity in knot theory.

\begin{definition}[Ma-Qiu index]
\label{definition:MQ}
The \emph{Ma-Qiu index} $a(G)$ of a group $G$ is the minimum number of normal generators of the commutator subgroup $[G,G]$.
\end{definition}

The Ma-Qiu index $a(K)$ of a knot $K$ in $S^{3}$ is the Ma-Qiu index of its knot group $G(K)=\pi_1(S^{3} \setminus K)$. In \cite{mq}, it is shown that $a(K) \leq u(K)$ holds, where $u(K)$ is the unknotting number, the minimum number of crossing changes that needed to convert $K$ to the unknot. 

The \emph{Nakanishi index} $m(K)$ of a knot $K$ is the minimum number of generators of the Alexander module of $K$. More generally, for a group $G$, the Nakanishi index $m(G)$ is defined as the minimum number of the generators of the Alexander module, the $\Z[G\slash [G,G]]$-module $[G,G]\slash [[G,G],[G,G]]$. Since $m(G)\leq a(G)$, Ma-Qiu's inequality $a(K) \leq u(K)$ subsumes the famous inequality $m(K) \leq u(K)$ \cite{na}.

Our main observation is that the Ma-Qiu index measures the difference of relators of two groups with the same abelianizations and generating sets.

\begin{theorem}
\label{theorem:main}
Let $G=\langle S \: | \: R \rangle$ and $G'=\langle S \: | \: R' \rangle$ be two (possibly infinite) presentations of groups with the same set of generators. 
Let $R_0 =R \cap R'$, $R_G = R \setminus R_0$, $R_{G'}=R' \setminus R_0$, and let $\pi_G:\langle S \rangle \rightarrow G$ be the projection map. 
If $\pi_G(R_{G'}) \subset [G,G]$ and $H_1(G;\Z) \cong H_1(G';\Z)$ then $a(G') \leq \# R_G + a(G)$.
\end{theorem}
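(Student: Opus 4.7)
The plan is to work inside the free group $F := \langle S \rangle$, where $G = F/N_F(R)$ and $G' = F/N_F(R')$ for $N_F(\cdot)$ the normal-closure-in-$F$ operation. The Ma-Qiu index of a quotient $F/N$ is the minimum cardinality of a subset $Z \subset [F,F]\,N$ satisfying $N_F(Z)\,N = [F,F]\,N$, so the entire question reduces to a comparison of subgroups of $F$.

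First I would translate the two hypotheses into subgroup equalities. The condition $\pi_G(R_{G'}) \subset [G,G]$ gives $R_{G'} \subset [F,F]\,N_F(R)$, whence $N_F(R') = N_F(R_0)\,N_F(R_{G'}) \subset [F,F]\,N_F(R)$ and therefore $[F,F]\,N_F(R') \subset [F,F]\,N_F(R)$. The natural projection $F/[F,F]\,N_F(R') \twoheadrightarrow F/[F,F]\,N_F(R)$ realizes a surjection $H_1(G';\Z)\twoheadrightarrow H_1(G;\Z)$, and the assumption $H_1(G;\Z)\cong H_1(G';\Z)$ promotes this surjection to an isomorphism (either because the given isomorphism is the one induced by the identity on $S$, or via a Hopfian-ness argument when the abelianizations are finitely generated). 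This yields the key equality $[F,F]\,N_F(R) = [F,F]\,N_F(R')$.

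Next, set $n = a(G)$ and choose lifts $\widetilde{y}_1, \dots, \widetilde{y}_n \in [F,F]\,N_F(R)$ of normal generators of $[G,G]$, so by definition $N_F(\widetilde{y}_1, \dots, \widetilde{y}_n)\,N_F(R) = [F,F]\,N_F(R)$. The candidate normal generating set for $[G',G']$ is $Z = \{\widetilde{y}_1, \dots, \widetilde{y}_n\} \cup R_G$, of cardinality $n + \#R_G$. Each element of $Z$ lies in $[F,F]\,N_F(R) = [F,F]\,N_F(R')$, so its image in $G'$ lies in $[G',G']$. The normal-closure identity then unravels as
\begin{align*}
N_F(Z)\,N_F(R') &= N_F(\widetilde{y}_1, \dots, \widetilde{y}_n)\,N_F(R_G)\,N_F(R_0)\,N_F(R_{G'})\\
&= N_F(\widetilde{y}_1, \dots, \widetilde{y}_n)\,N_F(R)\cdot N_F(R_{G'})\\
&= [F,F]\,N_F(R)\cdot N_F(R_{G'}) = [F,F]\,N_F(R) = [F,F]\,N_F(R'),
\end{align*}
where the fourth equality absorbs $N_F(R_{G'})\subset [F,F]\,N_F(R)$. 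Projecting to $G'$, this says exactly that the images of $Z$ normally generate $[G',G']$, giving $a(G')\le \#Z = \#R_G + a(G)$.

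The main obstacle is the opening step: extracting the subgroup equality $[F,F]\,N_F(R) = [F,F]\,N_F(R')$ from an abstract isomorphism of abelianizations. Once that is in hand, the rest is clean bookkeeping with normal closures in $F$, the $\#R_G$ extra generators exactly accounting for the relators of $G$ that are lost in passing to $G'$ and must be reinstated as explicit normal generators of $[G',G']$.
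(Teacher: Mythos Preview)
Your proof is correct and follows essentially the same approach as the paper: lift normal generators of $[G,G]$ to the free group $F=\langle S\rangle$, adjoin $R_G$, and verify that the resulting set together with $R'$ normally generates the preimage of $[G',G']$ in $F$. You are in fact more explicit than the paper about the one genuine subtlety---the paper tacitly treats the isomorphism $H_1(G;\Z)\cong H_1(G';\Z)$ as the canonical map induced by the identity on $S$ (writing the chain of equalities $H_1(G';\Z)=H_1(G;\Z)=\langle S\mid R_0,R_G,r_1,\dots,r_k\rangle$ as literal identities of quotients), which is exactly the point you flag as the ``main obstacle''; your Hopfian remark for the finitely generated case is the honest way to close that gap in the applications.
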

\begin{proof}
 For a subset $Y$ of a group $X$ we denote by $\langle \!\langle Y \rangle\!\rangle$ the normal closure of $Y$ in $X$. 

Assume that $a(G)=k$ so there exists $g_1,\ldots,g_k \in [G,G] \subset G$ such that $\langle \! \langle g_1,\ldots, g_k \rangle \! \rangle =[G,G]$.
We take $r_i \in \pi_G^{-1}(g_i) \subset \langle S \rangle$. Then  
\[ H_1(G;\Z) = G \slash [G,G] = G\slash \langle \! \langle g_1,\ldots, g_k \rangle \! \rangle  =  \langle S \: | \: R_0, R_G, r_1,\ldots,r_k \rangle. \]
Since $\pi_G(R_{G'}) \subset [G,G]$ and $H_1(G';\Z) = H_1(G;\Z)$ it follows that
\begin{align*}
H_1(G';\Z) &= H_1(G;\Z) \\
&= \langle S \: | \: R_0, R_G,  r_1,\ldots,r_k\rangle\\
&= \langle S \: | \: R_0, R_{G'},R_{G},  r_1,\ldots,r_k \rangle \\
& = G' \slash\langle\! \langle \pi_{G'}(R_G), \pi_{G'}(r_1),\ldots,\pi_{G'}(r_k) \rangle \! \rangle
\end{align*}
where $\pi_{G'}: \langle S \rangle \rightarrow G'$ be the projection map.
This shows that 
\[ \langle\! \langle \pi_{G'}(R_G), \pi_{G'}(r_1),\ldots,\pi_{G'}(r_k) \rangle \! \rangle = [G',G']\]
hence $a(G') \leq \# R_G + k$.
\end{proof}

This simple property allows us to deduce many properties and applications of Ma-Qiu index.

First of all, we observe the following upper bound of $a(G)$. For a group $G$ let $r(G)$ be the \emph{rank} of $G$, the minimum number of generators.

\begin{corollary}
\label{cor:rank}
If $G$ is finitely generated, then $a(G)$ is finite. More precisely,
\[ a(G) \leq r(G) + \frac{r(H_1(G;\Z))(r(H_1(G;\Z))-3)}{2} \]
holds.
\end{corollary}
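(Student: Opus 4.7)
The plan is to apply Theorem~\ref{theorem:main} by comparing $G$ with its own abelianization $A := H_1(G;\Z)$, presented on a common generating set of $G$. Because $A$ is abelian its commutator subgroup is trivial, so $a(A)=0$; the entire bound will therefore come from counting the ``extra'' relators needed to abelianize $G$, and the content of the argument lies in choosing generators of $G$ adapted to the structure of $A$. Finiteness of $a(G)$ will follow immediately from the explicit upper bound.

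Write $n = r(G)$ and $d = r(H_1(G;\Z))$. The main preparatory step is to produce a minimal generating set $y_1, \ldots, y_n$ of $G$ such that $y_{d+1}, \ldots, y_n \in [G,G]$. The strategy is standard: Smith normal form applied to the surjection $\Z^n \twoheadrightarrow H_1(G;\Z)$ coming from any minimal generating set yields a $GL_n(\Z)$ change of basis of $\Z^n$ under which $n-d$ basis vectors land in the kernel; any such change of basis lifts through $\mathrm{Aut}(F_n) \twoheadrightarrow GL_n(\Z)$ to a sequence of Nielsen transformations, which transport any size-$n$ generating set of $G$ to another size-$n$ generating set whose abelianizations realize the new basis. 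This is the most delicate step of the argument, but it reduces directly to these well-known facts.

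With such generators in hand, fix any presentation $G = \langle y_1, \ldots, y_n \mid R \rangle$ and consider the set
\[
C := \{y_{d+1}, \ldots, y_n\} \cup \{[y_i, y_j] : 1 \leq i < j \leq d\}
\]
of cardinality $(n-d) + \binom{d}{2} = n + d(d-3)/2$. Every element of $C$ lies in $[G,G]$, so $G/\langle\!\langle C\rangle\!\rangle$ has the same first homology as $G$; moreover this quotient is generated by the pairwise commuting images of $y_1, \ldots, y_d$, hence is abelian, so it equals $A$. In particular $\langle y_1, \ldots, y_n \mid R \cup C \rangle \cong A$. Theorem~\ref{theorem:main} can now be applied with its $G$ taken to be $A$ (presented by $R \cup C$) and its $G'$ taken to be our $G$ (presented by $R$): one has $R_0 = R$, $R_{G'} = \emptyset$ so the hypothesis $\pi(R_{G'}) \subset [A,A]$ holds vacuously, the $H_1$-hypothesis is automatic, and $\# R_G \leq \# C$. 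The resulting inequality $a(G) \leq \# R_G + a(A) \leq \# C$ gives the bound.
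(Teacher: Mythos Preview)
Your proof is correct and follows essentially the same strategy as the paper: present $H_1(G;\Z)$ by adding $\binom{d}{2}+(n-d)$ null-homologous relators to a presentation of $G$ on a well-chosen minimal generating set, then apply Theorem~\ref{theorem:main} with $a(H_1(G;\Z))=0$. The only real difference is that you justify the ``well-chosen'' step more carefully via Smith normal form and the surjection $\mathrm{Aut}(F_n)\twoheadrightarrow GL_n(\Z)$, arranging the stronger condition $y_{d+1},\ldots,y_n\in[G,G]$; the paper simply asserts one can take $\pi_H(s_1),\ldots,\pi_H(s_h)$ to generate $H_1$ and writes the extra relators as $s_k^{-1}s_1^{c_{k,1}}\cdots s_h^{c_{k,h}}$ instead of $y_k$.
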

\begin{proof}
Let $r=r(G)$ and $h=r(H_1(G;\Z))$. Take a presentation $\langle S \: | \: R\rangle$ of $G$ with the generating set $S=\{s_1,\ldots,s_r\}$. Let $\pi_H:\langle S \rangle \rightarrow H_1(G;\Z)$ be the projection map.
We take a presentation so that $\pi_H(s_1),\ldots,\pi_H(s_{h})$ generate $H_1(G;\Z)$.
For $k=h+1,\ldots,r$ let $\pi_H(s_{k}) = \sum_{i=1}^{h}c_{k,i}\pi_H(s_{i})$.
Then $H_1(G;\Z)$, the abelianization of $G$, is presented as 
\[
 H_1(G;\Z) = \langle S \: | \: R, [s_i,s_j] \ (1\leq i<j \leq h), s_k^{-1}s_1^{c_{k,1}}\cdots s_{h}^{c_{k,h}} (k= i+1,\ldots,r) \rangle.
\]
By Theorem \ref{theorem:main}, $a(G) \leq \frac{h(h-1)}{2}+(r-h)$.
\end{proof}

In \cite{ky} it is shown that $a(K) \leq r(G(K))-1$. Corollary \ref{cor:rank} generalizes this result for general groups, and in particular for link cases.
\begin{corollary}
For an $\ell$-component link $L$, $a(L) \leq r(G(L)) + \frac{\ell (\ell-3)}{2}$. 
\end{corollary}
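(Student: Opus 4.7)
The statement is an immediate instance of Corollary \ref{cor:rank} applied to the link group $G = G(L) = \pi_1(S^3 \setminus L)$, so there is essentially no new work to do beyond identifying the two numerical inputs that appear in that bound: the rank $r(G)$ (which is just $r(G(L))$ by definition and appears verbatim on the right-hand side) and the free rank $h = r(H_1(G;\Z))$ of the abelianization. My entire plan is therefore to compute $h$ for a link group and substitute.

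\textbf{Key homological input.} For an $\ell$-component link $L \subset S^3$, a standard Alexander duality (or equivalently a Mayer--Vietoris computation on a tubular neighborhood of $L$ and its complement) gives
\[ H_1(S^3 \setminus L;\Z) \;\cong\; \Z^{\ell}, \]
with a preferred $\Z$-basis provided by the meridians of the $\ell$ components. In particular $r(H_1(G(L);\Z)) = \ell$, so there is no torsion correction to worry about. Substituting $h = \ell$ into the inequality of Corollary \ref{cor:rank} yields at once
\[ a(L) \;=\; a(G(L)) \;\leq\; r(G(L)) + \frac{\ell(\ell-3)}{2}, \]
which is the desired bound.

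\textbf{Obstacles and sanity check.} There is no real obstacle: the argument is a one-line deduction, with the only non-formal ingredient being the classical identification $H_1(S^3 \setminus L;\Z)\cong\Z^\ell$. As a consistency check, specializing to $\ell=1$ (a knot) gives $a(K)\leq r(G(K)) + \tfrac{1\cdot(-2)}{2} = r(G(K))-1$, recovering the knot bound from \cite{ky} mentioned immediately before the corollary statement, which is exactly the generalization the author advertises.
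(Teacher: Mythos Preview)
Your proposal is correct and matches the paper's approach exactly: the paper gives no separate proof for this corollary, treating it as an immediate specialization of Corollary~\ref{cor:rank} upon observing that $H_1(G(L);\Z)\cong\Z^{\ell}$ for an $\ell$-component link. Your write-up, including the sanity check recovering the $\ell=1$ case of \cite{ky}, is precisely the intended deduction.
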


To illustrate further applications, we introduce the following notions.
Let $S$ be a generating set of a group $G$. We say that a relator $r$, which we regard as an element of the free group $\langle S \rangle$ generated by the set $S$, is \emph{null-homologous} in $G$ if $\pi_G(r) \in [G,G]$, where $\pi_G: \langle S\rangle \rightarrow G$ is the natural projection.

\begin{definition}[(Null-homologous) relator replacement]
For a presentation $\langle S\: | \: R \rangle$ of a group $G$, the \emph{relator replacement} is an operation that replaces a relator $r \in R$ with another relator $r'$ to get a presentation $\langle S\: | \: R \setminus \{r\}, r' \rangle$ of another group $G'$.
We say that a relator replacement is \emph{null-homologous} if $r$ is null-homologous in $G'$ and $r'$ is null-homologous in $G$.
\end{definition}

When $G$ and $G'$ are related by a sequence of null-homologous relator replacement, $H_1(G;\Z)=H_1(G';\Z)$. 
Conversely, when $G$ and $G'$ are finitely generated, then $G$ and $G'$ are related by a sequence of null-homologous relator replacements, since as we have seen in Corollary \ref{cor:rank} we may convert $G$ to $H_1(G;\Z)$ by adding at most $r(G) + \frac{r(H_1(G;\Z))(r(H_1(G;\Z))-3)}{2}$ null-homologous relators.

\begin{definition}[Presentation distance]
Let $G$ and $G'$ be groups with $H_1(G;\Z)=H_1(G';\Z)$.
The \emph{presentation distance} $d_P(G,G')$ of $G$ and $G'$ is the minimum number of null-homologous relator replacements needed to convert $G$ to $G'$. 
\end{definition}

Theorem \ref{theorem:main} shows that $|a(G)-a(G')| \leq 1$ if $G$ and $G'$ are related by the single null-homologous relator replacement, hence we get the following.

\begin{corollary}
\label{cor:MQ-univ-bound}
For groups $G, G'$ with $H_1(G;\Z) = H_1(G';\Z)$, if $G$ and $G'$ are finitely generated, then 
\[ |a(G)-a(G')| \leq d(G,G') \leq r(G) + r(G') + r(H_1(G;\Z))(r(H_1(G;\Z))-3)\]
\end{corollary}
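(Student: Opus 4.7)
The plan is to prove the two inequalities separately. For the lower bound $|a(G) - a(G')| \leq d_P(G, G')$, I would induct on $n := d_P(G, G')$, the base case $n=0$ being trivial. The single-replacement case, already flagged by the author just above the corollary, reduces to an application of Theorem \ref{theorem:main} with $R_G = \{r\}$ and $R_{G'} = \{r'\}$: the hypothesis $\pi_G(R_{G'}) \subset [G, G]$ is precisely the null-homologousness of $r'$ in $G$, while $H_1(G;\Z) \cong H_1(G';\Z)$ holds for every null-homologous replacement. One obtains $a(G') \leq 1 + a(G)$; by symmetry (the null-homologous condition is preserved under swapping $G$ and $G'$) one also gets $a(G) \leq 1 + a(G')$. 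Chaining this inequality along a length-$n$ minimal realizer of $d_P(G, G')$ yields the lower bound.

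For the upper bound, I would factor the conversion through the common abelianization $H := H_1(G;\Z) = H_1(G';\Z)$, invoking the triangle inequality $d_P(G, G') \leq d_P(G, H) + d_P(H, G')$. The proof of Corollary \ref{cor:rank} explicitly builds a presentation of $H$ from a minimal presentation of $G$ by adjoining $\tfrac{h(h-1)}{2} + (r(G) - h) = r(G) + \tfrac{h(h-3)}{2}$ null-homologous relators, where $h := r(H_1(G;\Z))$. Each adjunction can be realized as a single null-homologous relator replacement by first padding the presentation with the trivial relator $1$---which leaves the group unchanged---and then replacing $1$ with the desired new relator; both $1$ and the new relator are automatically null-homologous in both source and target. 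Hence $d_P(G, H) \leq r(G) + \tfrac{h(h-3)}{2}$, and the symmetric construction gives $d_P(H, G') \leq r(G') + \tfrac{h(h-3)}{2}$. Summing yields the stated bound $r(G) + r(G') + h(h-3)$.

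The step I expect to require the most care is the seam at $H$ in the triangle inequality. A relator replacement fixes the generating set, but the presentations of $H$ arising from $G$ and from $G'$ naturally live over the possibly different generating sets $S_G$ and $S_{G'}$. To legitimately concatenate the two halves into one replacement sequence one must reconcile these two presentations of the same group $H$; this is straightforward if Tietze transformations between presentations of the same group are regarded as cost-free operations at the seam, but in a strict interpretation it requires passing at the outset to a common generating set $S_G \cup S_{G'}$ and verifying that the extra bookkeeping needed to introduce and then eliminate the additional generators and their defining relations does not inflate the count past $r(G) + r(G') + h(h-3)$.
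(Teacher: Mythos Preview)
Your proposal is correct and follows exactly the route the paper sketches: the lower bound by iterating the single-replacement case of Theorem~\ref{theorem:main}, and the upper bound by passing through the common abelianization using the relator count from the proof of Corollary~\ref{cor:rank}. The seam issue you flag---reconciling the two presentations of $H$ over possibly different generating sets---is real and is not addressed in the paper, which simply states ``hence we get the following''; your observation that it can be handled either by treating Tietze moves as cost-free or by working over $S_G\cup S_{G'}$ from the start is the right way to close it.
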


This corollary allows us to give a unified framework for the Ma-Qiu index bound of various distances in knot theory.

Let $\mathcal{Q}$ be a set consisting of pair of (possibly oriented) tangles. Here by abuse of notation, we will often confuse a tangle $Q$ with its diagram.
A \emph{local move $\mathcal{M}=\mathcal{M}_{\mathcal{Q}}$ defined by $\mathcal{Q}$} is an operation of knots that replaces a tangle $Q$ inside a knot with another tangle $Q'$ for $(Q,Q') \in \mathcal{Q}$. For example, the \emph{crossing change}, the most famous and fundamental move in knot theory, is a local move defined by 
\[\mathcal{Q} =\left \{ \left(
\raisebox{-1.1em}{
\begin{picture}(24,24)
\put(0,2){\line(1,1){24}}
\put(10,16){\line(-1,1){10}}
\put(24,2){\line(-1,1){10}}
\end{picture} }
\raisebox{-0.9em}{,}
\raisebox{-1.1em}{
\begin{picture}(24,24)
\put(0,2){\line(1,1){10}}
\put(24,2){\line(-1,1){24}}
\put(14,16){\line(1,1){10}}
\end{picture}}
\right)
\right\}.\]

For a local move $\mathcal{M}$, the \emph{$\mathcal{M}$-Gordian distance} $d_{\mathcal{M}}(K,K')$ of two knots $K,K'$ is the minimum number of the move $\mathcal{M}$ needed to convert $K$ into $K'$. Here we allow $d_{\mathcal{M}}(K,K')=\infty$.
The $\emph{$\mathcal{M}$-unknotting number}$ $u_{\mathcal{M}}(K,K')$ is defined by $d_{\mathcal{M}}(K,{\sf Unknot})$.

By the van-Kampen theorem or the Wirtinger presentation, when a local move $\mathcal{M}$ has the property that it always sends a knot to a knot, the local move $\mathcal{M}$ often induces null-homologous relator replacements of the knot groups hence the $\mathcal{M}$-Gordian distance $d_{\mathcal{M}}(K,K')$ is bounded below by the (certain constant multiple of) the presentation distance.

Thus Corollary \ref{cor:MQ-univ-bound} shows that the Ma-Qiu index gives lower bounds for $\mathcal{M}$-Gordian distance for many local moves $\mathcal{M}$.
In particular, due to the inequality $m(K) \leq a(K)$ of the Nakanishi index, the Nakanishi index also enjoys the same properties. Thus our argument also gives a unified and simple proof of various Nakanishi index bounds in literature.

Here we give one of the most general local move that contains many local moves as their special case.

An $n$-tangle $(B,T)$ is \emph{trivial} if $(B,T)$ is homeomorphic to $(D^{2}\times [0,1], \{p_1,\ldots,p_n\} \times [0,1])$. An $n$-tangle $(B,T)$ gives a pairing of   $2n$ points $\partial T$. We say that two trivial tangles $(B,T)$ and $(B,T')$ with $\partial T=\partial T'$ \emph{connects the same endpoints} if $T$ and $T'$ give the same paring of $2n$ points.

\begin{definition}[Proper $n$-tangle replacement]
We say that a knot $K'$ is obtained by \emph{proper $n$-tangle replacement} from a knot $K$ if there is a ball $B$ such that $(B,T)=(B, B\cap K)$ is a trivial $n$-tangle, and $K'$ is obtained from $K$ by replacing $(B,T)$ with a different trivial tangle $(B,T')$ so that $T$ and $T'$ connects the same endpoints.
\end{definition}

By van-Kampen theorem, a proper $n$-tangle replacement induces $(n-1)$ null-relator replacements of the knot groups. Therefore we get the following.

\begin{theorem}
\label{theorem:n-replacement}
If a knot $K'$ is obtained from a knot $K$ by a proper $n$-tangle replacement, then $|a(K)-a(K')| \leq n-1$.
\end{theorem}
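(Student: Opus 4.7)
The plan is to produce presentations of $G(K)$ and $G(K')$ on a common generating set that differ by exactly $n-1$ null-homologous relators, and then invoke Theorem \ref{theorem:main} in both directions. I would set up the van Kampen decomposition $S^{3}\setminus K = X \cup (B\setminus T)$ with $X=\overline{S^{3}\setminus B}$; the piece $X$ is common to $K$ and $K'$, and the overlap deformation-retracts onto $\partial B \setminus \partial T = \partial B \setminus \partial T'$, a $2n$-punctured sphere whose fundamental group is free of rank $2n-1$ on meridians $\mu_{1},\ldots,\mu_{2n}$ modulo $\mu_{1}\cdots\mu_{2n}=1$.

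Since $T$ is a trivial $n$-tangle, $\pi_{1}(B\setminus T)$ is free of rank $n$ on meridians $m_{1},\ldots,m_{n}$ of the $n$ strands, and for each strand $i$ with endpoints labelled $(j_1(i),j_2(i))$ the inclusion $\partial B\setminus \partial T \hookrightarrow B\setminus T$ sends $\mu_{j_1(i)}$ and $\mu_{j_2(i)}$ to conjugates of $m_i$. After choosing base paths so that $\mu_{j_1(i)}$ maps to $m_i$ on the nose, the $2n-1$ van Kampen relations reduce, after eliminating the $m_i$, to exactly $n-1$ conjugation relations of the form $\mu_{j_2(i)} = w_i\,\mu_{j_1(i)}\,w_i^{-1}$ with $w_i = w_i(T)$ (one of the $n$ candidate conjugation relations being absorbed by $\mu_{1}\cdots\mu_{2n}=1$). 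Thus $G(K)$ admits a presentation $\langle S_{X} \mid R_{X},\ \mu_{j_2(i)} w_i \mu_{j_1(i)}^{-1} w_i^{-1}\ (i=1,\ldots,n-1)\rangle$, where $\langle S_{X} \mid R_{X}\rangle$ presents $\pi_{1}(X)$.

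Performing the identical construction for $K'$ yields a presentation of $G(K')$ sharing the same generators $S_{X}$, the same relators $R_{X}$, and the same pairing of endpoints (this is where \emph{proper} is used: $T$ and $T'$ connect the same endpoints), differing only in the $n-1$ words $w_i$ being replaced by $w_i' = w_i(T')$. Each tangle-derived relator abelianizes to $\mu_{j_2(i)} - \mu_{j_1(i)} = 0$ in $H_{1}=\Z$ because all of the $\mu$'s are meridians of the same knot, so the replacements are null-homologous in both groups; since $H_1(G(K);\Z) = H_1(G(K');\Z) = \Z$, Theorem \ref{theorem:main} applies with $\#R_{G(K)} = n-1$, yielding $a(K') \leq (n-1) + a(K)$. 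Swapping the roles of $K$ and $K'$ gives the reverse inequality and hence the stated bound.

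The main technical obstacle is the van Kampen bookkeeping: certifying that precisely $n-1$ relators, not $n$, survive after elimination (this rests on the relation $\mu_1 \cdots \mu_{2n}=1$ on the $2n$-punctured sphere and on a coherent choice of base paths inside the trivial tangle so that the $m_i$ can be identified with boundary meridians), and arranging the set-up so that $K$ and $K'$ genuinely share the presentation of $\pi_{1}(X)$ on the nose. Once these points are settled, the result follows as a direct citation of Theorem \ref{theorem:main}.
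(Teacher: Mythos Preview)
Your proposal is correct and follows essentially the same route as the paper: the paper states in one line that ``by van-Kampen theorem, a proper $n$-tangle replacement induces $(n-1)$ null-homologous relator replacements of the knot groups'' and then applies Theorem~\ref{theorem:main} (via Corollary~\ref{cor:MQ-univ-bound}), while you have spelled out the van Kampen bookkeeping that justifies this count. Your identification of the key technical point---that the relation $\mu_1\cdots\mu_{2n}=1$ on the punctured sphere absorbs one of the $n$ conjugation relations, leaving exactly $n-1$---is the content the paper leaves implicit.
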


\begin{example}[Sharp move]
The \emph{sharp move} $\#$ is a local move defined by 
\[
\mathcal{Q} =\left \{ \left(
\raisebox{-1.2em}{
\begin{picture}(35,30)
\put(2,3){\vector(1,1){27}}
\put(5,0){\vector(1,1){28}}
\put(30,0){\line(-1,1){10}}
\put(32,3){\line(-1,1){10}}
\put(15,20){\vector(-1,1){10}}
\put(12,18){\vector(-1,1){10}}
\end{picture} }
\raisebox{-1em}{,}
\raisebox{-1.2em}{
\begin{picture}(35,30)
\put(2,3){\line(1,1){10}}
\put(5,0){\line(1,1){10}}
\put(30,0){\vector(-1,1){28}}
\put(32,3){\vector(-1,1){27}}
\put(23,18){\vector(1,1){10}}
\put(20,20){\vector(1,1){10}}
\end{picture} }
\right)
\right\}.\]
Since it is a proper $4$-tangle replacement, it follows that $3|a(K)-a(K')| \leq d_{\#}(K,K')$ which subsumes the inequality $3|m(K)-m(K')| \leq d_{\#}(K,K')$ \cite{ms}.
\end{example}

A proper $2$-tangle replacement is called a proper rational tangle replacement \cite{ilm,mz}. The \emph{rational unknotting number} $u_{q}(K)$ is the proper $2$-tangle replacement unknotting number, the minimum number of proper rational tangle replacements needed to convert $K$ into the unknot. Theorem \ref{theorem:n-replacement} gives the following refinement of original Ma-Qiu's inequality $a(K) \leq u(K)$.

\begin{corollary}
\[ m(K) \leq a(K) \leq u_{q}(K)\leq u(K)\]
\end{corollary}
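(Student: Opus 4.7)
The plan is to check the three inequalities one by one; each is a short application of material already assembled in the paper.

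\emph{First}, the inequality $m(K) \leq a(K)$ is already noted in the introduction. For completeness I would verify it as follows: if $g_1,\dots,g_k \in [G,G]$ normally generate $[G,G]$ in $G = G(K)$, then their images in the Alexander module $[G,G]\slash [[G,G],[G,G]]$ generate it as a $\Z[G\slash [G,G]]$-module, because the conjugation action of $G$ on $[G,G]$ descends to the $\Z[G\slash [G,G]]$-action on the quotient. Hence the minimum number of module generators is at most $a(K)$.

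\emph{Second}, for $a(K) \leq u_{q}(K)$ I would apply Theorem \ref{theorem:n-replacement} with $n = 2$: every proper rational tangle replacement is a proper $2$-tangle replacement, and therefore changes the Ma-Qiu index by at most $2-1 = 1$. Take a sequence $K = K_0, K_1, \ldots, K_{u_q(K)} = {\sf Unknot}$ realising $u_q(K)$. The unknot group is $\Z$, so $a({\sf Unknot}) = 0$; telescoping the bounds $|a(K_i) - a(K_{i+1})| \leq 1$ then gives $a(K) \leq u_q(K)$.

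\emph{Third}, for $u_q(K) \leq u(K)$ I need to observe that a crossing change is itself a proper rational tangle replacement. Inside the $3$-ball $B$ containing the crossing, both the positive and negative crossings consist of two properly embedded arcs in $B$ with the same four endpoints on $\partial B$; both are trivial $2$-tangles, and they connect the same pairs of endpoints. Thus a crossing change is a proper $2$-tangle replacement, and any sequence of $u(K)$ crossing changes unknotting $K$ yields an admissible sequence for the rational unknotting number, giving $u_q(K) \leq u(K)$.

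There is no real obstacle here: the corollary is a direct chaining of Theorem \ref{theorem:n-replacement}, the elementary fact $m(G) \leq a(G)$ used earlier in the paper, and the observation that a crossing change is a proper rational tangle replacement. The only point requiring a moment's thought is that the crossing change really matches the endpoint pairing in the local ball, which is immediate from the local picture.
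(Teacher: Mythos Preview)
Your proposal is correct and matches the paper's (implicit) argument: the paper simply states that the corollary follows from Theorem~\ref{theorem:n-replacement}, relying on $m(G)\leq a(G)$ from the introduction, the $n=2$ case of that theorem for $a(K)\leq u_q(K)$, and the evident fact that a crossing change is a proper rational tangle replacement for $u_q(K)\leq u(K)$. You have just unpacked these three steps explicitly.
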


This inequality is useful to determine the Nakanishi index of knots.

\begin{example}[Nakanishi index of $12a_{504},12a_{642},12n_{278}$]
According knotinfo \cite{lm}, the Nakanishi of knots $12a_{504},12a_{642},12n_{278}$ are either $1$ or $2$ but their precise values were unknown. 

The knot $12a_{504}$ is the Montesions knot $K(\frac{2}{3},\frac{1}{3},\frac{12}{5})=K(\frac{2}{3},\frac{10}{3},\frac{-3}{5})$. By replacing the rational tangle $\frac{10}{3}$ with the rational tangle $\frac{0}{1}$ we get the unknot (c.f \cite[Theorem 7]{mz}). Since this is a proper rational tangle replacement, $u_q(12n_{278})=1$ hence $m(12a_{504})=a(12a_{504})=1$. 

Similarly, the knot $12a_{642}$ (resp. $12n_{278}$) is the Montesinos knot $K(\frac{1}{3}, \frac{3}{4},\frac{2}{7}) = K(\frac{4}{3},-\frac{1}{4},\frac{2}{7})$ (resp. $K(\frac{2}{3},-\frac{3}{5}, \frac{4}{5})$). By replacing the rational tangle $\frac{4}{3}$ (resp. $\frac{4}{5}$) with $\frac{0}{1}$ we get the unknot hence $u_q(12a_{642})=u_q(12n_{278})=1$. Therefore $m(12a_{642})=m(12n_{278})=1$.
\end{example}

One of an advantage of our argument is that it is algebraic, hence the same argument can be directly applied for wider situations, such as, virtual or welded knot, or, 2-knots. This makes a sharp contrast with geometric method such as Montesinos trick (see \cite{mz} for an application for Montesions trick for rational unknotting number, for example).

For a virtual or welded knot $K$, let $G(K)$ be its knot group, the group obtained by Wirtinger presentation of its diagram. The Ma-Qiu index $a(K)$ is defined as $a(G(K))$.

Although our argument applies to give a lower bound of distances for more complicated moves (for example, virtualized sharp and other moves introduced in \cite{nnsw}), here we illustrate the simplest move.

The \emph{virtualization} is a move of virtual knot diagram defined by 
\[ \mathcal{Q}= \left \{
\left(
\raisebox{-1.1em}{
\begin{picture}(24,24)
\put(0,2){\line(1,1){24}}
\put(10,16){\line(-1,1){10}}
\put(24,2){\line(-1,1){10}}
\end{picture} }
\raisebox{-1em}{,}
\raisebox{-1.1em}{
\begin{picture}(24,24)
\put(0,2){\line(1,1){24}}
\put(24,2){\line(-1,1){24}}
\put(12,14){\circle{6}}
\end{picture}}
\right)\raisebox{-1em}{,} \
\left(
\raisebox{-1.1em}{
\begin{picture}(24,24)
\put(0,2){\line(1,1){10}}
\put(24,2){\line(-1,1){24}}
\put(14,16){\line(1,1){10}}
\end{picture} }
\raisebox{-1em}{,}
\raisebox{-1.1em}{
\begin{picture}(24,24)
\put(0,2){\line(1,1){24}}
\put(24,2){\line(-1,1){24}}
\put(12,14){\circle{6}}
\end{picture} }
\right)
\right\}.\]
A virtual or welded knot diagram $D$ is \emph{$(m,n)$-unknottable} if $D$ can be converted to a diagram of the trivial knot by applying $m$ virtualizations and $n$ crossing changes. From the Wirtinger presentation, it follows that the virtualization induces a single null-homologous relator replacement of the knot groups. Therefore we get the following improvement of the Nakanishi index bound in \cite[Theorem 1]{kkkm}.

\begin{corollary}
If $K$ admits an $(m,n)$-unknottable diagram $D$, then $m(K) \leq a(K)\leq m+n$.
\end{corollary}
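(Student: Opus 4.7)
The plan is to apply Theorem \ref{theorem:main}, in the ``$|a(G)-a(G')|\leq 1$'' form noted just before Corollary \ref{cor:MQ-univ-bound}, inductively along a sequence of $m + n$ moves that converts $D$ into an unknot diagram. Two ingredients are needed: crossing changes induce single null-homologous relator replacements of the Wirtinger knot group presentation, and virtualizations do too---the latter is stated in the paragraph preceding the corollary.

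First I would record the crossing-change ingredient. At a classical crossing the Wirtinger relation takes the form $x_c = x_b^{\pm 1} x_a x_b^{\mp 1}$ depending on the sign; swapping the exponent sign is exactly a crossing change, and both relators are null-homologous since all Wirtinger generators represent meridians and are mutually homologous. For virtualizations, the paper's claim unpacks as follows: using a fixed generating set consisting of the arcs of the original diagram $D$, a virtualization at a crossing replaces the relation $x_c = x_b^{\pm 1} x_a x_b^{\mp 1}$ by $x_c = x_a$ (reflecting that after virtualization the two strands no longer interact classically), and both sides are null-homologous.

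With these in place, each of the $m + n$ moves changes $a$ by at most $1$, hence
\[ a(K) \leq a({\sf Unknot}) + m + n = m + n. \]
Combined with $m(K) \leq a(K)$ noted in the introduction, this yields $m(K) \leq a(K) \leq m+n$.

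The main subtlety I anticipate is the bookkeeping around the generating set: a virtualization geometrically merges two arcs of the diagram, so the most naive Wirtinger presentation changes rank mid-sequence. Working throughout with the arcs of the \emph{original} diagram $D$ and tracking how the relations evolve bypasses this, but one should confirm that each intermediate presentation really does present the knot group of the corresponding diagram in the sequence.
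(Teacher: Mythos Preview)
Your proposal is correct and follows essentially the same route as the paper. The paper's argument is the single sentence preceding the corollary---``the virtualization induces a single null-homologous relator replacement of the knot groups''---together with the already-established fact that a crossing change (a proper $2$-tangle replacement) changes $a$ by at most one; you have simply unpacked both claims at the level of Wirtinger relations, and your handling of the generating-set bookkeeping (keeping the arcs of $D$ throughout and recording a virtualization as the replacement $x_c = x_b^{\pm 1}x_a x_b^{\mp 1} \rightsquigarrow x_c = x_a$) is the natural way to make that sentence precise.
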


Finally, we point out the Ma-Qiu index is also applied to a Dehn surgery distance.

\begin{corollary}
\label{cor:surgery}
Let $M$ and $M'$ be oriented closed 3-manifolds with $H_1(M;\Z) \cong H_1(M',\Z)$. If $M'$ is obtained by a Dehn surgery on a knot $K$ in $M$, $|a(\pi_1(M))- a(\pi_1(M'))| \leq 1$.
\end{corollary}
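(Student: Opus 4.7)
The plan is to realize $\pi_1(M)$ and $\pi_1(M')$ as two quotients of the exterior group $\pi_1(E)$ that differ by a single relator, verify that the relator replacement is null-homologous in both directions, and then apply Theorem \ref{theorem:main} twice.

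Let $E = M \setminus \mathring{N}(K)$ with boundary torus $T$, and write $\mu \subset T$ for the meridian of $K$ (the filling slope giving back $M$) and $\alpha \subset T$ for the filling slope producing $M'$. Choosing any presentation $\pi_1(E) = \langle S \mid R\rangle$ and regarding $\mu, \alpha$ as words in the free group on $S$, van Kampen's theorem gives
\[
\pi_1(M) = \langle S \mid R, \mu\rangle, \qquad \pi_1(M') = \langle S \mid R, \alpha\rangle,
\]
so these presentations differ by the single relator replacement $\mu \leftrightarrow \alpha$. To invoke Theorem \ref{theorem:main} I must verify $\alpha \in [\pi_1(M), \pi_1(M)]$ and $\mu \in [\pi_1(M'), \pi_1(M')]$.

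Abelianizing gives $H_1(M;\Z) = H_1(E;\Z)/\langle [\mu]\rangle$ and $H_1(M';\Z) = H_1(E;\Z)/\langle [\alpha]\rangle$. The half-lives-half-dies principle, applied to the 3-manifold $E$ with torus boundary, identifies the image of $H_1(T;\Z) \to H_1(E;\Z)$ with an infinite cyclic subgroup $L = \Z\ell$, so $[\mu] = m\ell$ and $[\alpha] = n\ell$ for some integers $m, n$. A direct computation with elementary divisors shows that $H_1(E;\Z)/\langle m\ell\rangle \cong H_1(E;\Z)/\langle n\ell\rangle$ forces $|m| = |n|$, hence $\langle [\mu]\rangle = \langle [\alpha]\rangle$ as subgroups of $H_1(E;\Z)$. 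This yields $[\alpha] = 0$ in $H_1(M;\Z)$ and $[\mu] = 0$ in $H_1(M';\Z)$, as required.

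Theorem \ref{theorem:main} now applies with $(G, G') = (\pi_1(M), \pi_1(M'))$ and again with the roles swapped, yielding $a(\pi_1(M')) \leq a(\pi_1(M)) + 1$ and $a(\pi_1(M)) \leq a(\pi_1(M')) + 1$, which together give the claim. The main obstacle is the algebraic step extracting $\langle [\mu]\rangle = \langle [\alpha]\rangle$ in $H_1(E;\Z)$ from the abstract isomorphism $H_1(M;\Z) \cong H_1(M';\Z)$; once this is in hand, everything else is a direct invocation of Theorem \ref{theorem:main}.
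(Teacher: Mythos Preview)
Your overall strategy---exhibit $\pi_1(M)$ and $\pi_1(M')$ as fillings of $\pi_1(E)$ differing by a single relator and then invoke Theorem~\ref{theorem:main}---is precisely the route the paper has in mind, and you correctly isolate the null-homologous condition as the only thing left to verify.

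The gap is in that verification. Half-lives-half-dies only controls the rank over $\Q$; over $\Z$ the image of $H_1(T;\Z)\to H_1(E;\Z)$ is isomorphic to $\Z\oplus\Z/d$, where $d$ is the order of the rational longitude $\lambda$ in $H_1(E;\Z)$, and $d>1$ does occur (the orientable twisted $I$-bundle over the Klein bottle has $d=2$). So your sentence ``identifies the image \dots\ with an infinite cyclic subgroup'' is false as stated. More seriously, the conclusion you are aiming for---that $H_1(M)\cong H_1(M')$ forces $\langle[\mu]\rangle=\langle[\alpha]\rangle$ in $H_1(E)$---can fail outright. Take $E$ to be the Seifert piece $M\!\left(D^2;(3,1),(3,2)\right)$: one computes $H_1(E;\Z)\cong\Z\oplus\Z/3$, the rational longitude has order $3$, and the two primitive slopes $\lambda+\mu_0$ and $2\lambda+\mu_0$ both yield fillings with $H_1\cong\Z/9$, yet the image of the second slope has order $3$ (not $1$) in the first filling's $H_1$. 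Thus in this example the relator replacement is \emph{not} null-homologous and Theorem~\ref{theorem:main} does not apply as written. Your torsion-order count does correctly force $|m|=|n|$ once the image is cyclic, but when $d>1$ that is not enough to pin down the $\Z/d$ component, and no ``direct computation with elementary divisors'' will close the gap.
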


This is a generalization of a simple observation that when an integral homology sphere $M$ is obtained by a Dehn surgery on a knot in $S^{3}$, $\pi_1(M) (=[\pi_1(M),\pi_1(M)])$ is normally generated by one element. Similarly, by applying Corollary \ref{cor:surgery} for the knot complement, we get the inequality $m(K) \leq a(K) \leq sd(K)$ \cite{na2}.
Here $sd(K)$ is the \emph{surgery description number} of the knot $K$, the minumum number of null-homologous twists (i.e. $\frac{1}{m}$-surgery on unknot $C$ in the knot complement $S^{3} \setminus K$ whose linking number to $K$ is zero) to make $K$ the unknot.

\begin{remark}
The assumption $H_1(M;\Z) = H_1(M';\Z)$ is crucial. 
Let $M$ be the $0$-surgery of the connected sum of $n$ trefoils. Since $M$ is fibered, by \cite{ka} $a(\pi_1(M))= m(\pi_1(M)) = n$. 
\end{remark}

We close this note by pointing out a similarity between the Ma-Qiu index $a(K)$ (or, the Nakanishi index $m(K)$) and the torsion order $\Ord(K)$ of the knot Floer homology. 
In \cite{e2} it is shown that $\Ord(K)$ enjoys the same inequality as Theorem \ref{theorem:n-replacement}, namely, $|\Ord(K)-\Ord(K')| \leq n-1$ holds when $K$ and $K'$ are related by a proper $n$-tangle replacement.

It is also shown that $\Ord(K) \leq br(K)-1$, where $br(K)$ is the bridge index of the knot $K$ \cite{jmz}.
Although $a(K)\leq r(G(K)) \leq br(K)-1$, unlike Ma-Qiu index, $\Ord(K) \leq r(G(K))$ is not true in general. For the $(p,q)$-torus knot $T_{p,q}$ with $p<q$, $\Ord(T_{p,q})=p-1$ \cite{e1} but $r(G(T_{p,q}))=2$. Similarly, for the knot $10_{63}$,  $\Ord(10_{63})=1$ since it is alternating \cite{e1} but $m(10_{63})=a(10_{63})=u_q(10_{63})=u(10_{63})=2$. Thus although there are interesting similarities, the Ma-Qiu index (or the Nakamishi index) and the torsion order are independent.

\section*{Acknowledgements}

The author thank Teruhisa Kadokami for stimulating discussions and encouragements. 
The author is partially supported by JSPS KAKENHI Grant Numbers 19K03490, 21H04428, and 23K03110.

\end{document}